\newif\ifhide
\newlength{\wdth}
\newtheorem{theorem}{Theorem}
\newtheorem{Lemma}{Lemma}
\newtheorem{Remark}{Remark}
\title{{\normalsize\tt\hfill\jobname.tex}\\
On strong law of large numbers for pairwise independent random variables
}
\author{A.T. Akhmiarova\footnote{M.V. Lomonosov Moscow State University \& Institute for Information Transmission Problems (A.A.Kharkevich Institute); email: lili-r01@yandex.ru}, \; A.Yu. Veretennikov\footnote{Institute for Information Transmission Problems (A.A.Kharkevich Institute), email: ayv@iitp.ru}}
\date{}
\begin{document}
\maketitle

\begin{abstract}
\noindent
A new  version of a strong law of large numbers for pairwise independent random variables is proposed. The main goal is to relax the assumption of existence of the expectation for each summand. The assumption of pairwise independence is also relaxed.

\medskip

\noindent
Keywords: Strong law of large numbers; partial pairwise independence

\medskip

\noindent
MSC2020: 60F15

\end{abstract}

\section{Introduction}
The Law of Large Numbers (LLN for what follows) is one of the basic theorems of probability and mathematical statistics. The interest to any new version of it is natural because the majority of  methods of justifying consistency in parameter estimation and of basic properties in hypothesis testing are based on the LLN in one form or another. Nowadays, more and more involved contemporary models emerge, which requires a regular revision of old methods. Moreover, further results on precision of the rate of convergence are often constructed on certain extensions of algorithms which are not optimal themselves but only consistent and, hence, are usually only LLN-based.  

The history of the LLN started with the work by Jakob Bernoulli in 1713, see \cite{Bernoulli}; then it was continued by de Moivre, Laplace, Chebyshev \cite{Chebyshev} et al. This history might have seemed to be completed with the discoveries by Khintchine \cite{Khintchin} and Kolmogorov \cite[theorem 6.3]{Kolmogorov} who found that both for the weak LLN (in \cite{Khintchin}) and for the strong LLN (in \cite{Kolmogorov}, SLLN in what follows) it suffices that the expectation of each of the summands are finite, in addition to 
the joint independence and identical distributions. Note that the first strong version of the LLN only appeared in the beginning of the 20th century \cite[chapter II, section 11]{Borel1909} and \cite[Lemma 2 \& formulae (28)-(29)]{Cantelli}. See \cite{Seneta92} about the historical role of these two papers published in the ``pre-Kolmogorov's'' epoch and thus not based on the modern probability axioms \cite{Kolmogorov} but at the same time very close to them. The work \cite{Mazurkiewicz} by Mazurkiewicz was the third one published at the same time as Cantelli's paper and also on the SLLN; it remained practically unknown for a very long time.

An  early version of the SLLN for only pairwise independent identically distributed summands  with a finite variance, was established in  \cite{Rajchman32}, just one year before the publication of the famous ``Grundbegriffe Der Wahrscheinlichkeitsrechnung'' by Kolmogorov in German in 1933 (the reference \cite{Kolmogorov} is given on the 2nd edition of its Russian translation). In 1937 the important paper by Marcinkiewicz and Zygmund \cite{MZ} was published, which concerned an SLLN version for independent and identically distributed summands possessing the moment of order $a\in (0,2)$.

\ifhide
{\color{blue}\fi
One more direction of the LLN and SLLN which must be also mentioned is the general ergodic theorems. Birkhoff \cite{Birk} established his ergodic theorem for dynamical system with an integral invariant. Further, Khintchin \cite{Khin-ergodic} gave his version explicitly for strictly stationary processes (in his paper convergence in probability was established) with a proper reference to the earlier important work by von Neumann. Then Kolmogorov published his simplified proof (which is different from his own SLLN proof) of what he called Birkhoff -- Khintchin ergodic theorem also for stationary sequences and processes. His result in \cite{Kolmogorov2} is about convergence almost surely and it does cover Kolmogorov's SLLN, thus extending it to dependent strictly stationary processes, although, not to all pairwise independent ones. And vice versa, notice that the pairwise independence case  does not fully cover the case of strictly stationary processes. 
\ifhide}\fi

The  development of the WLLN and SLLN continues until nowadays. First of all, not touching all related works, let us 
point out the paper  \cite{Etemadi81} by Etemadi as one of the most important publications of the whole research area, in which a new approach for establishing the SLLN was offered. This approach is suitable not only for jointly independent random variables as in \cite[theorem 6.III]{Kolmogorov} but also for pairwise independent ones; moreover, it keeps the minimal assumption on the first moment of the summands and only uses Chebyshev's inequality instead of the more ``usual'' in this area martingale inequalities introduced by Kolmogorov.

This direction was further extended to the case of the non-equally distributed summands under certain additional conditions in the paper  \cite{Hungarians} by Cs\"orgo, Tandori, and Totik. Then followed the papers \cite{Rogge, Chandra, Rogge2} by Landers \& Rogge, and by Chandra, solely and with co-authors,  in which the conditions on the non-equally distributed summands were weakened to the uniform integrability (UI) and further to the Ces\`aro-UI; in \cite{Rogge2} the summands are assumed to be just non-correlated\footnote{Notice that under the assumption of finite variances a version of the SLLN may be found in some contemporary collections of exercises, see \cite[Exercise  4.24]{ZSCh} (of course, this exercise is marked here as a more difficult one).}. 

Different kinds of UI conditions have been proposed for weak LLN and for strong LLN. One of them, in the paper \cite{Chandra} suffices for a new version of the SLLN which extends Kolmogorov's result, moreover, allows for pairwise independent random variables; some other new versions of the SLLN were proved under strong UI conditions which were not enough to include the classical Kolmogorov's SLLN. Let us also highlight that the work \cite{Rajchman32} by Rajchman published originally in 1931 was briefly presented and extended in a relatively recent paper \cite{Chandra3}. Some other conditions allowed to establish new versions of the {\em weak} LLN, which will not be discussed in what follows. Also, some other SLLN extensions may be found in the paper \cite{bingham86}. Finally, in the recent publication \cite{Janisch} some new versions of sufficient conditions for the SLLN close to the ones in \cite{Chandra2} may be found. Let us add that more references may be found in the majority of the recent papers on the WLLN and SLLN; only a small part of the existing literature is cited in this paper.

{\bf Quick preview.}
The goal of the present paper is to offer some further  extension of the case of pairwise independent random variables, which includes Kolmogorov's SLLN \cite[theorem 6.5.3]{Kolmogorov}. The main contribution is that a certain infinite part of the summands may have infinite expectations, and that the random variables in this part  may be arbitrarily dependent on each other. Formally neither of these two features is allowed in any of the papers cited earlier. 
The  approach uses one of the theorems by Chandra from \cite{Chandra} and a new generalisation of the important auxiliary result from the paper \cite{Sawyer69} by Sawyer, which was briefly mentioned in the article \cite{Petrov} devoted to some other aspect of the LLN. References on other results and other directions in the area of the LLN may be found in historical parts of the cited literature. 
The present paper consists of four sections: this introduction, the setting, the SLLN for partially pairwise independent random variables (theorem 
\ref{thm1}), the proof of Theorem \ref{thm1}.

\section{The setting} 
Initially let us assume that the sequence $(X_n, n\ge 1)$ consists of  pairwise independent random variables  with finite expectations, for which without loss of generality the equalities 
\begin{equation}\label{eq2-1}
\mathsf E X_i = 0, \qquad i\ge 1,
\end{equation}
are satisfied. In what follows this requirement will be relaxed: finite  expectations will be assumed not for all summands.  The main result will be about another sequence $(Z_n)$ which will be constructed from $(X_n)$ and from one more complementary sequence of random variables $(Y_n)$.

Let us briefly describe what kind of a condition may be assumed in addition to (\ref{eq2-1}) for pairwise independent random variables, which would guarantee the SLLN for the sequence $(X_n)$. In this paper our choice will be condition (\ref{eq2-8}) in the end of this section.

Most recent extensions of the classical LLN for  pairwise independent random variables $X_n$ assume one or another form of the UI, either as 
\begin{equation}\label{eq2-2}
\sup_n \mathsf E|X_n| 1(|X_n|>N) \to 0, \quad N\to \infty, 
\end{equation}
or with Ces\`aro averages
\begin{equation}\label{eq2-3}
\sup_n \frac1{n} \sum_{k=1}^n \mathsf E|X_k| 1(|X_k|>N) \to 0, \quad N\to \infty,
\end{equation}
in addition to finite expectations of summands. 
One more condition which is also of the UI type, yet a little more restrictive, has the form (see \cite{BoseChandra94})
\begin{equation}\label{eq2-4}
\int_0^\infty \sup_n \mathsf P(|X_n|>x)dx < \infty.
\end{equation}
The interest and importance of the variant (\ref{eq2-4}) is because in the case of identically distributed summands it reduces to the classical Kolmogorov's condition, unlike other extensions of the SLLN for pairwise independent random variables. Conditions (\ref{eq2-2}) and (\ref{eq2-3}) in the absence of any other assumptions only suffice for a weak LLN \cite{Chandra, Rogge}.

It may be mentioned that if 
just a finite number of the first members of the sequence $(X_n)$ have infinite expectations, it, of course, does not affect the final asymptotic result. In turn, condition (\ref{eq2-4}) may be relaxed as follows: there exists $n_0$ such that 
\begin{equation}\label{eq2-7}
\int_0^\infty \underbrace{\sup_{n\ge n_0} \mathsf P(|X_n|>x)}_{=: \hat G^X(x)}dx < \infty,
\end{equation}
or, assuming a weaker,  Ces\`aro analogue of (\ref{eq2-7}),
\begin{equation}\label{eq2-8}
C(n_0):=  \int_0^\infty  \sup_{n\ge n_0} \frac1{n} \sum_{k=n_0}^n \mathsf P(|X_k|>x)dx < \infty. 
\end{equation}
The latter condition will be the main assumption on the sequence $(X_n)$ in the theorem of the next section, beside the pairwise independence. However, as it was already mentioned earlier, the theorem will tackle the SLLN not for the sequence $(X_n)$, but for another one denoted by $(Z_n)$, which is some mixture of $(X_n)$ with a second auxiliary sequence of random variables $(Y_n)$, where each $Y_n$ may have no finite expectation. More than that, the elements of the sequence $(Y_n)$ may be arbitrarily dependent. In the beginning of the next section it will be explained how the sequence $(Z_n)$ is constructed from $(X_n)$ and $(Y_n)$, and also the details of the assumptions about the sequence $(Y_n)$. 

\section{SLLN for partially pairwise independent random variables}
It is assumed that the sequence $(Z_n)$ consists of terms from two different  sequences $(X_i)$ and  $(Y_j)$, which possess different properties related to independence and existing moments; these properties are explained in what follows in this section.  The construction of $(Z_n)$ from $(X_i)$ and $(Y_j)$ means that for certain indices $n$ the random variable $Z_n$ is taken from the sequence $(X_i)$, while for the other, complementary indices $n$ this  random variable is chosen from the sequence $(Y_j)$. For each $n$ this choice is determined. It is assumed that where $X_i$ is chosen for the first time, it is $X_1$; where it is chosen for the second time, it is $X_2$, and so on. 
Similarly for the sequence $(Y_j)$. 
So, in this way, the sequence $(Z_n)$ may have a general outlook such as $(X_1,X_2,Y_1,X_3,X_4,X_5,Y_2,Y_3,X_6,\ldots)$.  

A more formal way to define the sequence $(Z_n)$ is as follows. Let us fix a {\bf deterministic} sequence of zeros and ones $(\alpha(n), \, n\ge 1)$. Thus, for each $n$, 
$\alpha(n)=0$, or $\alpha(n)=1$, and this choice
is deterministic, not random. Let
$$
\kappa(n) : = \sum_{k=1}^n \alpha(k), \quad \psi(n) = n- \kappa(n), \quad n\ge 1.
$$
Now, given two sequences  $(X_n)$ and $(Y_n)$, the construction of the sequence $(Z_n)$ reads, 
$$
Z_n =\left\{
\begin{array}{cc}
X_{\psi(n)}, & \alpha(n)=0, \\
Y_{\kappa(n)}, & \alpha(n)=1.
\end{array}
\right.
$$
In this way, for each $n$ the random variable $Z_n$ equals either some $X_i$, or some $Y_j$, depending on the value of $\alpha(n)$.

Recall that the random variables from the sequence $(X_n)$ are pairwise independent, satisfy a Ces\`aro-UI type condition (\ref{eq2-8}) with some $n_0\ge 1$, and they are also integrable and   
possess zero expectations starting from the index~$n_0$, 
\begin{equation}\label{E0}
\mathsf EX_n=0, \quad n\ge n_0. 
\end{equation}
Note that condition (\ref{eq2-8}) automatically implies that for $n\ge n_0$ expectations of all $X_n$ are finite.

The  sequence of random variables  $(Y_n)$ admits the property 
\begin{equation}\label{3-A1}
\mathsf E|Y_n|^a <\infty, \quad \text{with some  $a\in (0,1)$ for all $n\ge 1$},
\end{equation}
and, moreover, there exists $\varepsilon \ge 0$ such that for any $n\ge 1$  
\begin{equation}\label{3-A2b}
\frac1{n}\sum_{k=1}^n \mathsf P(|Y_k|^a >x) \le n^\varepsilon\tilde G^Y(x), \;\text{where} \;\int_0^\infty \tilde G^Y(x)dx\le C <\infty.
\end{equation}

Finally, it is required that the places where $\alpha(n)=1$ should be  ``rare'' in the following sense:  
\begin{equation}\label{3-A3}
\limsup_{n\to\infty}  \frac{\kappa(n)}{n^{\frac{a}{1+a\varepsilon}}} <\infty,
\end{equation}
where $a, \varepsilon$ are the same constants as in conditions (\ref{3-A1})-(\ref{3-A2b}). 
The main result of this paper is the following version of the SLLN.

\medskip

\begin{theorem}\label{thm1}
Under the assumptions 
(\ref{eq2-8})   -- (\ref{3-A3}) 
the following SLLN holds true: 
$$
\frac1{n} \sum_{k=1}^n Z_k \stackrel{\text{a.s.}}\to 0.
$$
\end{theorem}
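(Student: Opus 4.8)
The plan is to separate the two regimes hidden in $(Z_n)$ and attack them with different tools. From the construction, counting how often each of the two sequences has been used up to time $n$ gives the exact identity
$$
\sum_{k=1}^n Z_k \;=\; \sum_{j=1}^{\psi(n)} X_j \;+\; \sum_{j=1}^{\kappa(n)} Y_j,
$$
so, writing $S^X_m=\sum_{j=1}^m X_j$ and $S^Y_m=\sum_{j=1}^m Y_j$, it suffices to prove $n^{-1}S^X_{\psi(n)}\to 0$ and $n^{-1}S^Y_{\kappa(n)}\to 0$ almost surely. The first is an SLLN for the pairwise independent part; the second is a growth estimate for a possibly dependent, heavy-tailed sum that is rendered negligible by the sparsity of the indices where $\alpha=1$.

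For the $X$-term I would apply the theorem of Chandra \cite{Chandra}: the $X_n$ are pairwise independent, integrable and centred for $n\ge n_0$ by (\ref{E0}), and the Ces\`aro-UI condition (\ref{eq2-8}) holds; hence $m^{-1}S^X_m\to 0$ almost surely. Condition (\ref{3-A3}) forces $\kappa(n)=o(n)$, so $\psi(n)=n-\kappa(n)\to\infty$ and $\psi(n)/n\to 1$. Writing
$$
\frac1n S^X_{\psi(n)} \;=\; \frac{\psi(n)}{n}\cdot\frac{1}{\psi(n)}S^X_{\psi(n)},
$$
the right-hand side converges to $1\cdot 0=0$ almost surely, the inner factor because $(\psi(n))$ is a deterministic sequence of indices tending to infinity along which the SLLN for $(X_j)$ applies; the finitely many terms with possibly infinite mean before $n_0$ do not affect the limit.

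The $Y$-term is where the dependence must be absorbed, and since no cancellation is available I would use the crude bound $|S^Y_{\kappa(n)}|\le\sum_{j=1}^{\kappa(n)}|Y_j|$. Because $\kappa(\cdot)$ is non-decreasing and increases only in unit jumps, $S^Y_{\kappa(n)}$ is constant on each plateau of $\kappa$, so the largest value of $n^{-1}|S^Y_{\kappa(n)}|$ on a plateau occurs at its left endpoint $n_m$, the position of the $m$-th index with $\alpha=1$; hence (the case of finitely many jumps being trivial)
$$
\limsup_{n\to\infty}\frac{|S^Y_{\kappa(n)}|}{n} \;=\; \limsup_{m\to\infty}\frac{|S^Y_m|}{n_m}.
$$
Condition (\ref{3-A3}) gives $n_m\ge c\,m^{(1+a\varepsilon)/a}$ for large $m$, so it remains to establish the almost sure growth bound $\sum_{j=1}^m|Y_j|=o\bigl(m^{(1+a\varepsilon)/a}\bigr)$. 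This is exactly the role of the generalised Sawyer-type lemma \cite{Sawyer69}: starting from the averaged, integrable tail hypothesis (\ref{3-A2b}) on the variables $|Y_j|^a$, one truncates $|Y_j|$ at $m$-dependent levels, controls the almost surely finitely many exceedances by Borel--Cantelli, and estimates the truncated contribution by integrating the tail bound, the finiteness of $\int_0^\infty\tilde G^Y\,dx$ making both the exceedance series and the truncated sum summable along a geometric subsequence; monotonicity of the partial sums then upgrades the subsequential bound to the full sequence.

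The main obstacle is precisely this last growth estimate. Independence-based tools (variances, Kolmogorov or martingale maximal inequalities) are unavailable, and the one dependence-free inequality that is cheap, the subadditivity bound $|S^Y_m|^a\le\sum_{j\le m}|Y_j|^a$, is too weak: it only controls $\sum|Y_j|^a$, whose mean grows like $m^{1+\varepsilon}$ and therefore cannot be made $o\bigl(m^{1+a\varepsilon}\bigr)$ as the normalisation would require once $a<1$. The decisive step is thus to obtain an almost sure bound on $\sum_{j\le m}|Y_j|$ itself rather than on $\sum|Y_j|^a$, and the exponent $a/(1+a\varepsilon)$ in (\ref{3-A3}) is calibrated so that the sparse normalisation $n_m$ outgrows the true size of the dependent sum; checking that the truncation levels, the tail integral, and the geometric subsequence in the generalised Sawyer estimate combine to yield this exponent with a margin is the delicate part of the argument.
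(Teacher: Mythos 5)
Your decomposition $\sum_{k\le n}Z_k=S^X_{\psi(n)}+S^Y_{\kappa(n)}$ and your handling of the $X$-part follow the paper's route exactly: its Lemma 1 invokes the Ces\`aro-UI SLLN for pairwise independent variables (via \cite{Korchevsky}, \cite{BoseChandra94}, in the spirit of \cite{Chandra}), and the finitely many terms before $n_0$ are discarded just as you do. Your reduction of the $Y$-part to the almost sure growth bound $\sum_{j\le m}|Y_j|=o\bigl(m^{(1+a\varepsilon)/a}\bigr)=o\bigl(m^{1/a+\varepsilon}\bigr)$ via the sparsity condition (\ref{3-A3}) is also the correct target, and your observation that the dependence-free subadditivity bound $|S^Y_m|^a\le\sum_{j\le m}|Y_j|^a$ is too weak is accurate and well motivated.

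The gap is that this growth bound --- the entire content of the paper's Lemma 2 --- is asserted rather than proved. You attribute it to ``the generalised Sawyer-type lemma'' of \cite{Sawyer69}, but Sawyer's corollary concerns identically distributed variables; the paper itself stresses that both the statement and the proof of the non-identically-distributed, Ces\`aro-tail version are new, so it cannot be discharged by citation. Concretely, the paper sets $V_n=|Y_n|^a$, truncates at $\tilde V_n=\min\bigl(V_n,n^{1+\varepsilon/p}\bigr)$ with $p=1/a$, and proves $\mathsf E\sum_n \tilde V_n^{\,p}/n^{p+\varepsilon}<\infty$ by writing $n^{-(p+\varepsilon)}$ as a constant times the tail $\sum_{j\ge m}j^{-(p+1)}$ with $m=\lceil n^{1+\varepsilon/p}\rceil$ and then \emph{interchanging the order of summation} so that hypothesis (\ref{3-A2b}) --- which bounds only the Ces\`aro average of the tails $\mathsf P(|Y_k|^a>x)$, with an extra factor $n^{\varepsilon}$, and gives no control on any individual tail --- can be applied to the resulting inner average over $n$. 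From this one gets $\sum_n|Y_n|/n^{1/a+\varepsilon}<\infty$ a.s.\ (after checking that untruncating changes only finitely many terms) and concludes by Kronecker's lemma. Your sketch (``truncate, Borel--Cantelli the exceedances, sum along a geometric subsequence'') is the classical template for individual tail bounds; whether it survives when only the averaged bound (\ref{3-A2b}) is available, and whether the exponents close with the $n^{\varepsilon}$ loss, is precisely the question, and you explicitly defer it as ``the delicate part.'' As written, the proposal establishes the routine half of the theorem and leaves unproved the half that is new.
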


\begin{Remark}
Notice that both the UI and Ces\`aro-UI {\em for all} $Z_n$, generally speaking, may not be satisfied. In comparison with many of the earlier works the condition of pairwise independence is relaxed. Namely, no independence is required for the ``bad'' rare  sequence $(Y_n)$. 
Theorem \ref{thm1} generalises earlier results from \cite{Kolmogorov}, \cite{Etemadi81},  \cite{BoseChandra94}, including in the case $\varepsilon = 0$.   
In the latter case of $\varepsilon = 0$  condition (\ref{3-A2b}) becomes a version of the Ces\`aro-UI for the r.v. $|Y_k|^a$, which also generalises earlier used conditions. 
\end{Remark}

\section{Proof of Theorem \ref{thm1}}
Two lemmata will be required for the proof. The first one is practically a result established in \cite{BoseChandra94} and \cite{Korchevsky}, and  is an analogue of Etemadi's  theorem from \cite{Etemadi81}. This is why its short proof is presented with the reference to \cite{BoseChandra94, Korchevsky} just for the completeness and for the convenience of the reader. 
The second lemma is an analogue of the corollary to the lemma 3 in the paper \cite{Sawyer69} for non-equally distributed random variables. The statement of lemma 2 and its proof are both new, although, both are based on the results from \cite{Sawyer69} which are pointed out.

\begin{Lemma}[for $(X_n)$]\label{lem1}
Let conditions (\ref{eq2-8}) -- (\ref{E0}) be satisfied. Then 
\begin{equation}\label{eqlem2-1}
\frac1{n} \sum_{k=1}^{n-\kappa(n)} X_k \stackrel{\text{a.s.}}\to 0, \quad n\to \infty.
\end{equation}
\end{Lemma}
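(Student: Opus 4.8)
The plan is to follow Etemadi's approach as adapted in \cite{BoseChandra94, Korchevsky} for pairwise independent, non-identically distributed summands, with the only twist being that the summation stops at $n-\kappa(n)$ rather than at $n$. First I would note that since condition (\ref{3-A3}) forces $\kappa(n)/n \to 0$, we have $n-\kappa(n) = \psi(n) \sim n$, so that dividing the partial sum $\sum_{k=1}^{\psi(n)} X_k$ by $n$ is asymptotically the same as dividing by $\psi(n)$. Thus it suffices to prove $\frac{1}{m}\sum_{k=1}^m X_k \xrightarrow{\text{a.s.}} 0$ as $m\to\infty$ (with $m=\psi(n)$), and the stated conclusion follows by the comparison of $n$ with $\psi(n)$.

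The core SLLN for the sequence $(X_n)$ under (\ref{eq2-8}) and (\ref{E0}) is exactly the Bose--Chandra/Korchevsky version of Etemadi's theorem, which I would carry out in the standard three moves. The first move is truncation: set $X_k' := X_k\,\mathbf{1}(|X_k|\le k)$ and show, via the Borel--Cantelli lemma, that $\sum_k \mathsf P(X_k \neq X_k')$ is controlled by $\int_0^\infty \hat G^X(x)\,dx$, so that the truncated and untruncated averages have the same almost-sure limit. Here the Ces\`aro form (\ref{eq2-8}) is what makes the tail integral summable after the standard rearrangement $\sum_k \mathsf P(|X_k|>k) \le \int_0^\infty \sup_{n\ge n_0}\frac1n\sum_{k=n_0}^n \mathsf P(|X_k|>x)\,dx$ up to constants. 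The second move is to pass to centred truncated variables and estimate the variance: using pairwise independence, $\mathsf{Var}\!\big(\sum_{k\le m} X_k'\big) = \sum_{k\le m}\mathsf{Var}(X_k')$, and Chebyshev's inequality along a geometric subsequence $m_j = \lfloor \rho^j\rfloor$ gives almost-sure convergence of $\frac{1}{m_j}\sum_{k\le m_j}(X_k' - \mathsf E X_k')$ to zero, where the variance bound $\sum_k \mathsf{Var}(X_k')/k^2 < \infty$ again reduces to the integrability of $\hat G^X$.

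The third move is to fill in the gaps between the subsequence points: monotonicity of the partial sums of the positive and negative parts (treated separately, as in Etemadi) controls the oscillation of $\frac{1}{m}\sum_{k\le m} X_k'$ for $m_j \le m < m_{j+1}$, and letting $\rho\downarrow 1$ closes the argument. Finally I would check that the centring terms $\frac1m\sum_{k\le m}\mathsf E X_k'$ vanish: since $\mathsf E X_k=0$ for $k\ge n_0$ by (\ref{E0}), we have $\mathsf E X_k' = -\mathsf E\big(X_k\,\mathbf{1}(|X_k|>k)\big)$, whose Ces\`aro average tends to zero again by (\ref{eq2-8}). I expect the only genuinely new bookkeeping — and hence the main point to get right — to be the reduction from the index $n-\kappa(n)$ to $n$ in the denominator; everything else is the now-standard Etemadi/Bose--Chandra machinery, which is why the authors describe the proof as short and cite it rather than reproving it in full.
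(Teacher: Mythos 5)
Your proposal is correct and follows essentially the same route as the paper: the authors simply observe that the first $n_0-1$ summands are negligible after division by $n$ and then cite Theorem~2 of \cite{Korchevsky} (see also \cite{BoseChandra94}), which is precisely the Etemadi-type truncation/variance/subsequence argument you outline, so your write-up just unpacks their citation. One small remark: condition (\ref{3-A3}) is not among the hypotheses of Lemma~\ref{lem1}, but your reduction does not actually need $\psi(n)\sim n$ --- since $\psi$ is non-decreasing with $\psi(n)\le n$, the convergence $\frac1m\sum_{k\le m}X_k\to0$ a.s.\ already forces $\frac1n\sum_{k\le\psi(n)}X_k\to0$ a.s.
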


\begin{proof}
The moment properties of the first $n_0-1$ summands do not affect the limit in (\ref{eqlem2-1}), since
\[
\frac1{n}\sum_{k=1}^{n_0-1} X_k \stackrel{\text{a.s.}} \to 0, \quad n\to\infty,
\]
without any assumption. 
Hence, it suffices to check that 
\[
\frac1{n} \sum_{k=n_0}^{n-\kappa(n)} X_k \stackrel{\text{a.s.}}\to 0, \quad n\to \infty.
\]
This property and, therefore, the statement of the lemma follow from theorem 2 in \cite{Korchevsky}; see also \cite{BoseChandra94}. 
\end{proof}

\begin{Lemma}[for $(Y_n)$]\label{lem3}
Let there exist $a\in (0,1)$ and $\varepsilon\ge 0$ such that 
condition (\ref{3-A2b}) holds true for any $n\ge 1$ and condition  (\ref{3-A3}) is satisfied.
Then
\begin{equation}\label{eqYn0}
\frac1{n} \sum_{k=1}^{\kappa(n)} Y_k \stackrel{\text{a.s.}}\to 0, \quad n\to \infty.
\end{equation}
\end{Lemma}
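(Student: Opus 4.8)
The plan is to pass from the ``clock'' $n$ to the number $m=\kappa(n)$ of $Y$--terms and then reduce the whole statement to the almost sure convergence of a single random series. Put $\gamma:=\frac{1+a\varepsilon}{a}$, so that $a\gamma=1+a\varepsilon$ and $\gamma>1$. Condition (\ref{3-A3}) reads $\kappa(n)\le Cn^{a/(1+a\varepsilon)}$, i.e. $n\ge c\,\kappa(n)^{\gamma}$ with $c=C^{-\gamma}$; since between two consecutive jumps of $\kappa$ the numerator $\sum_{k=1}^{\kappa(n)}Y_k$ stays frozen while $n$ grows, it is enough to control the ratio at the jump times, where $n\asymp\kappa(n)^{\gamma}$. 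Thus, writing $d_k:=k^{\gamma}\uparrow\infty$, it suffices to prove that the series $\sum_{k\ge1}Y_k/d_k$ converges almost surely: Kronecker's lemma (which is purely pathwise and needs no independence) then yields $d_m^{-1}\sum_{k=1}^{m}Y_k\to0$, and $n\ge c\,d_{\kappa(n)}$ upgrades this to (\ref{eqYn0}) (the case of finitely many indices with $\alpha(n)=1$ being trivial). This is the point where the arbitrary dependence inside $(Y_n)$ becomes harmless: everything after the reduction uses only pathwise estimates, first moments, and the Borel--Cantelli lemma.

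To handle $\sum_k Y_k/d_k$ I would truncate at the natural level $d_k$, splitting $Y_k=Y_k\,1(|Y_k|\le d_k)+Y_k\,1(|Y_k|>d_k)=:U_k+V_k$. For the bounded part $\sum_k U_k/d_k$ it is enough to show $\sum_k \mathsf E|U_k|/d_k<\infty$, which gives absolute convergence almost surely with no independence. Here one must never replace the averaged tail by a single--term bound (that would cost a spurious factor $k^{\varepsilon}$ and destroy summability); instead one keeps the summation \emph{under the integral}, bounding $\sum_{k\le m}\mathsf E|U_k|$ by $\int_0^{d_m}\sum_{k\le m}\mathsf P(|Y_k|>t)\,dt$, applying (\ref{3-A2b}) in the form $\sum_{k\le m}\mathsf P(|Y_k|^a>x)\le m^{1+\varepsilon}\tilde G^Y(x)$, and converting $\int_0^\infty\tilde G^Y<\infty$ into the required rate through the substitution $x=t^a$ together with the elementary estimate $\int_0^{\tau}\tilde G^Y(t^a)\,dt=o(\tau^{1-a})$. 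An Abel summation of these averaged increments against the weights $d_k^{-1}$ then reduces matters to the convergence of a dyadic series whose summability is governed precisely by the exponent $a/(1+a\varepsilon)$ in (\ref{3-A3}); the finiteness of the individual $\mathsf E|Y_k|^a$ from (\ref{3-A1}) guarantees that each truncated mean is well defined.

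For the large--values part $\sum_k V_k/d_k$ the natural route is again Borel--Cantelli: if only finitely many $V_k$ are non--zero, the tail series is eventually constant. The exceedance probabilities have to be estimated through (\ref{3-A2b}) at the \emph{block} scale, grouping the indices into dyadic blocks $(2^{j-1},2^{j}]$ and using $\sum_{k\le 2^{j}}\mathsf P(|Y_k|^a>x)\le 2^{j(1+\varepsilon)}\tilde G^Y(x)$ at the block threshold $x=d_{2^{j}}^{\,a}=2^{j(1+a\varepsilon)}$. This turns the question into the convergence of
\[
\sum_{j}2^{j(1+\varepsilon)}\,\tilde G^Y\!\big(2^{\,j(1+a\varepsilon)}\big),
\]
which is exactly where the growth budget $m^{1+\varepsilon}$, the normalization $m^{1+a\varepsilon}$, and the integrability $\int_0^\infty\tilde G^Y<\infty$ must be balanced against one another.

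The main obstacle is precisely this balancing. The two requirements pull the truncation level in opposite directions: the first--moment (bulk) estimate wants it low, the Borel--Cantelli (tail) estimate wants it high, and no single--term tail bound can satisfy both. What makes the two series simultaneously summable is that (\ref{3-A2b}) is used only in its genuinely averaged, under--the--integral form, with the sparsity exponent $a/(1+a\varepsilon)$ of (\ref{3-A3}) absorbing the growth factor $m^{\varepsilon}$; this is the non--i.i.d. generalization of the corollary to Lemma~3 in \cite{Sawyer69} (cf.\ \cite{Petrov}) that the paper establishes. I expect the verification that the two resulting dyadic sums converge, together with the careful bookkeeping of the boundary terms $d_k\,\mathsf P(|Y_k|>d_k)$ arising from truncation, to be the technically hardest step; the reduction via Kronecker's lemma and the first--moment treatment of the bulk are routine once that estimate is in hand.
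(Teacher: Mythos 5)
Your overall architecture coincides with the paper's: truncate $Y_k$ at the level $d_k=k^{\gamma}$ with $\gamma=(1+a\varepsilon)/a$, prove that $\sum_k Y_k/d_k$ converges almost surely, apply Kronecker's lemma, and convert $d_{\kappa(n)}^{-1}\sum_{k\le\kappa(n)}Y_k\to0$ into (\ref{eqYn0}) via the sparsity condition (\ref{3-A3}); your bulk/tail splitting with Borel--Cantelli is a legitimate repackaging of the paper's single truncated series $\sum_n\min(|Y_n|,d_n)/d_n$ together with its ``only finitely many terms equal $1$'' remark. The problem is that the central estimate is not actually established, and the specific route you describe does not deliver it. If you bound the partial sums by $S_m:=\sum_{k\le m}\mathsf E|U_k|\le\int_0^{d_m}\sum_{k\le m}\mathsf P(|Y_k|>t)\,dt\lesssim m^{1+\varepsilon}d_m^{1-a}$ (even with the $o(\cdot)$ refinement) and then Abel-sum against the weights $d_k^{-1}$, the general term of the resulting series is
\[
S_k\bigl(d_k^{-1}-d_{k+1}^{-1}\bigr)\;\lesssim\;k^{1+\varepsilon}\,d_k^{1-a}\cdot k^{-\gamma-1}\;=\;k^{\varepsilon-a\gamma}\;=\;k^{-1+\varepsilon(1-a)},
\]
so the series diverges: it is exactly the harmonic series at $\varepsilon=0$ (the $o(1)$ gain from $\int_0^{\tau}\tilde G^Y(t^a)\,dt=o(\tau^{1-a})$ cannot sum $\sum_k o(1)/k$), and it diverges polynomially for $\varepsilon>0$. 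The same obstruction reappears in your Borel--Cantelli series: $\sum_j 2^{j(1+\varepsilon)}\tilde G^Y\bigl(2^{j(1+a\varepsilon)}\bigr)$ is comparable to $\sum_j 2^{j\varepsilon(1-a)}\int_{u_{j-1}}^{u_j}\tilde G^Y$ with $u_j=2^{j(1+a\varepsilon)}$, which is controlled by $\int_0^\infty\tilde G^Y<\infty$ only when the extra factor $2^{j\varepsilon(1-a)}$ is absent.

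The missing idea is precisely the rearrangement that the paper takes from Sawyer and around which its entire computation is organized: one must not integrate up to $d_m$ first and then sum over $m$, but interchange the two operations, so that for each fixed truncation variable $t$ the weights $d_k^{-1}$ are summed only over those $k$ with $d_k>t$, i.e.\ $k>t^{1/\gamma}$. Combined with an Abel summation of the Ces\`aro bound $\sum_{k\le m}\mathsf P(|Y_k|^a>t^a)\le m^{1+\varepsilon}\tilde G^Y(t^a)$ this produces $\sum_{k>t^{1/\gamma}}k^{\varepsilon-\gamma}\asymp t^{(1+\varepsilon-\gamma)/\gamma}$, and it is this extra \emph{negative} power of $t$ --- absent from your version, where the whole factor $d_m^{1-a}$ is evaluated at the top of the range --- that lets the $t$-integral close against $\int_0^\infty\tilde G^Y<\infty$. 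In the paper this interchange appears as the replacement of $n^{-(p+\varepsilon)}$ by the tail sum $\sum_{j\ge\lceil n^{1+\varepsilon/p}\rceil}j^{-(p+1)}$ followed by the swap of the $n$- and $j$-summations, which is exactly the point where the averaged form of (\ref{3-A2b}) enters. Your text gestures at ``keeping the summation under the integral,'' but the plan you then write down (partial-sum rate plus dyadic Abel summation) does not implement it; until that interchange is carried out explicitly, the balancing described in your last paragraph has not been achieved and the proof is incomplete.
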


\begin{proof}
Let 
$V_n:= |Y_n|^a$ and $\mathsf P(|Y_n|^a >x) =:\bar G_n^Y(x)$; notice that due to the condition (\ref{3-A1}) all random variables $V_n$ have finite, although, possibly not uniformly bounded expectations: 
$$
\mathsf EV_n \le n^{1+\varepsilon}\int_0^\infty \tilde G^Y(x)dx < \infty.
$$
Let $\tilde{V}_n:=\min (V_n, n^{1 +\varepsilon/p}) $, where $p>1$. We have, following the paper \cite[the proof of lemma 3]{Sawyer69} and some hints from earlier works, in particular,  \cite{MZ}, see also \cite{Korchevsky},  with  
$m= \lceil n^{1+\varepsilon a}\rceil$,  $\varepsilon' = \varepsilon a$, and using that 
$$
\frac{\lceil n^{1+\varepsilon/p}\rceil ^p}{n^{p+\varepsilon}} \le
\frac{(n^{1+\varepsilon/p} +1 )^p}{n^{p+\varepsilon}} 
\le \frac{(n +1 )^{p+\varepsilon}}{n^{p+\varepsilon}} \le 2^{p+\varepsilon}, \quad n\ge 1,
$$
and so, 
$$
\frac{2^{p+\varepsilon}}{n^{p+\varepsilon}} \le
\frac{2^{p+\varepsilon}}{\lceil n^{1+\varepsilon/p}\rceil ^p} \le 
\frac{2^{p+\varepsilon}}p 
\sum_{j=m}^\infty \frac1{j^{p+1}} 
= \frac{2^{p+\varepsilon}}p
\sum_{j=\lceil n^{1+\varepsilon/p}\rceil}^\infty \frac1{j^{p+1}}, 
$$
we estimate, 
\begin{align*}
& \mathsf E\sum_{n=3}^{\infty}\frac{\tilde{V}^p_n}{n^{ p+\varepsilon}}
=\sum_{n=3}^{\infty}\frac1{n^{p+\varepsilon}}\mathsf E(V^{p}_n\wedge n^{p+\varepsilon})
 \\\\
& = \sum_{n=3}^{\infty}\frac1{n^{p+\varepsilon}} 
\mathsf E \int_0^{\infty} p t^{p-1} 1(t<V_n\wedge n^{1+\varepsilon/p})dt
\\\\
& = \sum_{n=3}^{\infty}\frac1{n^{p+\varepsilon}} 
\mathsf E \int_0^{n^{1+\varepsilon/p}} p t^{p-1} 1(t<V_n)dt
 \\\\
& = \sum_{n=3}^\infty \frac1{n^{p+\varepsilon}} 
\int_0^{n^{1+\varepsilon/p}} p t^{p-1} \bar G^Y_n(t)dt 
\le \sum_{n=3}^\infty \frac1{n^{p+\varepsilon}} 
\int_0^{\lceil n^{1+\varepsilon/p}\rceil} p t^{p-1} \bar G^Y_n(t)dt 
 \\\\
& \le c_{p,\varepsilon}\sum_{n\ge 3} 
\underbrace{\sum_{j=m}^\infty \frac1{j^{p+1}}}_{= c m^{-p} = cn^{-(p+\varepsilon)}} 
\sum_{\ell =1}^{\lceil n^{1+\varepsilon/p} \rceil =m} \int_{\ell-1}^\ell p t^{p-1} \bar G^Y_n(t)dt
 \\\\
& \le c_{p,\varepsilon}\sum_{n\ge 3} 
\underbrace{\sum_{j=m}^\infty \frac1{j^{p+1}}}_{= c m^{-p} = cn^{-(p+\varepsilon)}} 
\sum_{\ell =1}^{j} \int_{\ell-1}^\ell p t^{p-1} \bar G^Y_n(t)dt
 \\\\
& \le 
c_{p,\varepsilon} \sum_{j\ge 3} \frac1{j^{p}} \sum_{\ell =1}^{j}  \int_{\ell-1}^\ell p t^{p-1} \underbrace{\frac1{j} \sum_{n=3}^{\lceil j^{p/(p+\varepsilon)} \rceil}\bar G_n^Y(t)}_{\le \tilde G^Y(t)}dt 
 \\\\
& \le c_{p,\varepsilon} \sum_{j\ge 3} \frac1{j^{p}} \sum_{\ell =1}^{j}  \int_{\ell-1}^\ell p t^{p-1} \tilde G^Y(t)dt.
\end{align*}

It was used that 
$j \ge  n^{1+\varepsilon/p}\,\Longleftrightarrow \, 
n \le j^{1/(1+\varepsilon/p)} \equiv  j^{p/(p+\varepsilon)}$. 
Further, 
\begin{align*}
&
c_{p,\varepsilon} \sum_{j\ge 3} \frac1{j^{p}} \sum_{\ell =1}^{j}  \int_{\ell-1}^\ell p t^{p-1} \tilde G^Y(t)dt 
 \\\\
& = c_{p,\varepsilon} \sum_{\ell = 1}^\infty \int_{\ell-1}^\ell p t^{p-1} \tilde G^Y(t)dt 
\underbrace{\sum_{j=3\vee \ell} ^\infty \frac1{j^{p}}}_{\displaystyle\le \frac{c_p}{(\ell)^{p-1}}}
\le  c_{p,\varepsilon} \int_0^\infty \tilde G^Y(t)dt <\infty.
\end{align*}
All constants $c_{p,\varepsilon}$ in the latter two calculi are finite and may be equal or not equal to one another. Also, notice that the first two terms of the series 
$
\mathsf E\sum_{n=1}^{\infty}n^{-p- \varepsilon}\tilde{V}^p_n
$
are finite. Therefore, the series 
$$
\sum_{n=1}^{\infty}\frac{\tilde{V}^p_n}{n^{p +\varepsilon}}<\infty,  
$$
converges almost surely along with 
$$
\sum_{n=1}^{\infty}\frac{V^p_n}{n^{p +\varepsilon}}
=\sum_{n=1}^{\infty}\frac{|Y_n|^{ap}}{n^{p +\varepsilon}}<\infty \quad (\text{a.s.}).  
$$

Indeed, due to the choice  
$\tilde{V}_n:=\min (V_n, n^{1 +\varepsilon/p}) $ we have, 
$$
\sum_{n=1}^{\infty}\frac{\min^p (V_n, n^{1 +\varepsilon/p})}{n^{p +\varepsilon}}
\equiv \sum_{n=1}^{\infty}\frac{\min (V_n^p, n^{p +\varepsilon})}{n^{p +\varepsilon}}
\equiv \sum_{n=1}^{\infty}
\min\left(\frac{V_n^p}{n^{p +\varepsilon}}, 1\right)
<\infty. 
$$
This implies that the number of terms which are equal to 1 is finite, otherwise the series would diverge. So, only a finite number of summands will be replaced by $1$, 
which will not affect the convergence of the series $\displaystyle 
\sum_{n=1}^{\infty}\frac{V^p_n}{n^{p +\varepsilon}}$.

\medskip

Now choosing $p=1/a$ we obtain
\[
\sum_{n=1}^{\infty}\frac{|Y_n|}{n^{a^{-1}  +\varepsilon}}<\infty \quad (\text{a.s.}).
\]
By virtue of Kronecker's lemma (see, for example, \cite[Lemma IV.3.2]{Shiryaev2}), it follows that 
\[
\frac1{n^{ \varepsilon+1/a}} \sum_{k=1}^{n} Y_k \stackrel{\text{a.s.}}\to 0, \quad n\to \infty,
\]
which implies (\ref{eqYn0}) due to the condition (\ref{3-A3}). 
The statement of the  lemma follows.
\end{proof}

\medskip
\noindent
{\em Proof of Theorem \ref{thm1}.}
The goal is establishing the convergence 
\begin{equation*}\label{eq4-1}
\mathsf P\left(\frac1{n} \sum_{k=1}^n Z_k 
\to 0, \quad n\to \infty\right) = 1. 
\end{equation*}
The required equality follows straightforwardly from (\ref{eqlem2-1}) and  (\ref{eqYn0}). Theorem \ref{thm1} is proved \hfill 
$\square$

\section*{Acknowledgements}
\noindent
For both authors this study was funded by the Theoretical Physics and Mathematics Advancement Foundation ``BASIS''. The authors are grateful to the anonymous Referee for useful remarks. 
\ifhide
{\color{blue}\fi The authors are also grateful to Jan Palczewskii and Tadeusz
Banek for help with access to less-available sources in Polish. The authors are thankful to the participants of the online seminar ``Probabillity and mathematical statistics, the three cities seminar'' for the discussion about links of the SLLN with Birkhoff -- Khintichin's ergodic theorem, and particular thanks to Michael Blank, Kostya Borovkov, Valery Oseledets, Sergey Pirogov,  Elena Zhizhina. A special debt of gratitude to Dancho Stoyanov for the comments to the two preprint versions (https://doi.org/10.48550/arXiv.2211.04435).
\ifhide
}\fi

\end{document}